\newfont{\bb}{msbm10 at 12pt}
\def\r{\hbox{\bb R}}
\def\m{\hbox{\bb M}}
\def\h{\hbox{\bb H}}
\def\c{\hbox{\bb C}}
\def\s{\hbox{\bb S}}
\def\R{\hbox{\bb R}}
\def\grad2{\|\nabla h\|^4}
\def\hr{\mathbb{H}^2\times\mathbb{R}}
\def\sr{\mathbb{S}^2\times\mathbb{R}}
\newcommand{\hm}{\mathbb{E}(\kappa , \tau)}
\newcommand{\Qb}{\overline{Q}}
\newcommand{\campo}{\mathfrak{X}}
\newcommand{\set}[1]{\left\{#1\right\}}
\newcommand{\To}{\longrightarrow}
\newcommand{\norm}[1]{\left\Vert #1 \right\Vert}
\newcommand{\eps}{\varepsilon}
\newcommand{\beq}{\begin{equation}}
\newcommand{\eeq}{\end{equation}}
\newcommand{\zb} {\bar{z}}
\newcommand{\abs}[1]{\left\vert #1 \right\vert}
\newcommand{\mm}{\hbox{\bb{M}}^2(\varepsilon)\times\hbox{\bb{R}}}
\def\pz{\frac{\partial\ }{\partial z}}
\def\pzb{\frac{\partial\ }{\partial \overline{z}}}
\begin{document}

\theoremstyle{plain}\newtheorem{lema}{Lemma}
\theoremstyle{plain}\newtheorem{definicion}{Definition}
\theoremstyle{plain}\newtheorem{proposicion}{Proposition}
\theoremstyle{plain}\newtheorem{teorema}{Theorem}
\theoremstyle{plain}\newtheorem{ejemplo}{Example}
\theoremstyle{plain}\newtheorem{remark}{Remark}
\theoremstyle{plain}\newtheorem{corolario}{Corollary}

\hyphenation{di-ffe-ren-tia-ble}

\begin{center}
\rule{15cm}{1.5pt} \vspace{0.5cm}

{\Large \bf Totally umbilical disks and applications to surfaces \\[4mm]
in three-dimensional homogeneous spaces}\\

\vspace{.3cm}

{\large Jos\'{e} M. Espinar$\,^\dag$\footnote{The author is
partially supported by Spanish MEC-FEDER Grant MTM2007-65249, and
Regional J. Andalucia Grants P06-FQM-01642 and FQM325}, Isabel
Fern\'{a}ndez$\,^\ddag$ \footnote{The author is partially
supported by Spanish MEC-FEDER Grant MTM2007-64504, and Regional
J. Andalucia
Grants P06-FQM-01642 and FQM325}}\\
\vspace{0.3cm} \rule{15cm}{1.5pt}
\end{center}

\vspace{.5cm}

\noindent $\mbox{}^\dag$ Institut de Math\'{e}matiques, Universit\'{e} Paris VII, 2 place
Jussieu, 75005 Paris, France;\\ e-mail: jespinar@ugr.es

\noindent $\mbox{}^\ddag$ Departamento de Matem\'{a}tica Aplicada I, Universidad de Sevilla, 41012 Sevilla,
Spain;\\ e-mail: isafer@us.es


\vspace{.5cm}

\begin{center}
{\it This paper is in honor of Professor Manfredo do Carmo for its 80'th birthday.}
\end{center}

\vspace{.3cm}

\begin{abstract}
Following \cite{choe} and \cite{dCF}, we give sufficient conditions for a disk type
surface, with piecewise smooth boundary, to be totally umbilical for a given Coddazi
pair. As a consequence, we obtain rigidity results for surfaces in space forms and
in homogeneous product spaces that generalizes some known results.
\end{abstract}
MSC: 53C42, 53C40.

\section{Introduction}

It is well known that a totally umbilical surface in $\r ^3$ is part of either  a round
sphere or a plane. Using this result, H. Hopf \cite{hopf} proved that an immersed
constant mean curvature (CMC) sphere in $\R^3$ must be a round sphere by
introducing a quadratic differential that turns out to be holomorphic on CMC
surfaces, and that vanishes at the umbilic points of the surface. Thus, the proof
follows from the fact that any holomorphic quadratic differential on a sphere must
vanish identically and the previous classification.

In the case of constant Gaussian curvature (CGC) surfaces, the Liebmann Theorem
states that the only complete surfaces with positive constant Gaussian curvature in
$\r ^3$ are the totally umbilical round spheres. T. K. Milnor \cite{Mi} gave a proof
of the Liebmann Theorem similar to the one due to Hopf. In fact, she proved that the
$(2,0)$-part of the first fundamental form of the surface is holomorphic for the
structure given by the second fundamental form if and only if the Gaussian curvature
is constant. Also, the zeroes of this quadratic differential are the umbilic points.

Thus, the existence of a certain holomorphic quadratic differential is the main tool
for the classification of immersed spheres with constant mean or Gaussian curvature.
The underlying idea for the construction of these two differentials relies on an
abstract structure, the {\em Codazzi pairs}, that can be defined on a differentiable
surface (for example, in the above situations, the Codazzi pair consists of the
first and second fundamental forms of the surface). Under some geometrical
conditions, a Codazzi pair gives rise to a holomorphic quadratic differential on the
surface that can be used to classify those surfaces that are topological spheres.

In this line, U. Abresch and H. Rosenberg (see \cite{AR} and \cite{AR2}) recently
showed the existence of such a differential for CMC surfaces in the homogeneous
spaces with a 4-dimensional isometry group. These homogeneous space are denoted by
$\hm$, where $\kappa$ and $\tau$ are constant and $\kappa -4\tau ^2 \neq 0$. They
can be classified as: the product spaces $\hr$ if $\kappa = -1$ and $\tau =0$, or
$\sr$ if $\kappa = 1$ and $\tau =0$, the Heisenberg space ${\rm Nil}_3$ if $\kappa =
0$ and $\tau = 1/2$, the Berger spheres $\s ^3 _{Berger}$ if $\kappa =1 $ and $\tau
\neq 0$, and the universal covering of ${\rm PSL}(2,\r)$ if $\kappa = -1$ and $\tau
\neq 0$. Using this differential they were able to classify all the immersed CMC
spheres in these spaces, putting the study of surfaces in homogeneous spaces in a
new light (see \cite{AR}, \cite{AR2}, \cite{CoR}, \cite{FM1}, \cite{FM2},
\cite{EGR}, \cite{ER} and references therein).

In the same spirit, J. A. Aledo, J. M. Espinar and J. A. G\'{a}lvez \cite{AEG1}
proved that for a large class of surfaces of CGC in $\hr$ and $\sr$ there exists a
Codazzi pair related with the the first fundamental form, second fundamental form
and height function. In addition, this pair has constant extrinsic curvature, which
gives the existence of a holomorphic quadratic differential for any surface of positive
constant curvature in $\h^2 \times \r$ and constant curvature greater than one in
$\s^2 \times \r$. Moreover, the holomorphic quadratic differential vanishes at the
\emph{umbilic points} of the introduced Codazzi pair, which allows them to classify
the complete CGC surfaces.\\

Regarding surfaces with non-empty boundary, a natural problem is to determine
whether such a surface is part of a totally umbilical sphere. In this way, J. Nitsche
\cite{N} showed that an immersed disk type CMC surface in $\r^3$ whose boundary is a
line of curvature must be a part of a totally umbilical surface. On the other hand,
J. A. G\'{a}lvez and A. Martínez \cite{GM} proved a Liebmann-type theorem for
immersed CGC disks in $\r ^3$ when the boundary is a line of curvature.

When the boundary is non regular, but piecewise differentiable, J. Choe \cite{choe} extended Nitsche's Theorem
under some additional conditions on the singular points at the boundary.
Its proof is based on the control of the zeroes of the holomorphic quadratic differential introduced by Hopf.

A similar result for surfaces in the product spaces $\s^2 \times \r$ and $\h^2
\times \r$ was proved by M.P. do Carmo and I. Fern\'{a}ndez in \cite{dCF}.
Nevertheless, in this work the mean curvature of the surface is not assumed to be constant, and therefore the Abresch-Rosenberg differential is not holomorphic. On the other hand, the required regularity of the surface is more restrictive than in \cite{choe}. The control of the zeroes Abresch-Rosenberg differential is achieved here by a more general condition on the mean curvature, and the use of the following result (see either \cite[Lemma 2.6.1, pp 70]{J} or \cite{HW} where the original proof is done, the version we use here is \cite[Lemma
2.7.1, pp 75]{J}):

\vspace{.2cm}

\begin{lema}\label{lem:cauchy}
Let $f:U\to\c$ be a differentiable function defined on a complex domain
$U\subset\c$, and suppose that there exists a continuous real-valued (necessarily
non negative) function $\mu$ such that
\begin{equation}\label{eq:cauchy}
|f_{\bar z}|\leq \mu(z)|f(z)|,\qquad \forall\,z\in U.
\end{equation}
Then either $f\equiv 0$ in $U$ or it has isolated zeroes. Moreover, if $z_0$ is an
isolated zero of $f$, there exists a positive integer $k$ such that locally around
$z_0$
\begin{equation}\label{eq:ceros}
f(z)=(z-z_0)^k g(z),
\end{equation}
where $g$ is a continuous non-vanishing function.
\end{lema}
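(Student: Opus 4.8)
The statement to prove is Lemma \ref{lem:cauchy}, which is a version of the classical Carleman similarity principle (Hartman-Wintner / Bers-Vekua). The plan is to reduce the problem to the known behavior of holomorphic functions by "dividing out" the non-holomorphic part of $f$ via an integral transform. The key point is that the hypothesis $|f_{\bar z}| \le \mu(z)|f(z)|$ forces $f_{\bar z}$ to be dominated, pointwise, by $f$ itself; wherever $f \ne 0$ this can be rewritten as $f_{\bar z} = a(z) f(z)$ for a bounded measurable function $a$ with $|a(z)| \le \mu(z)$, and we want to absorb $a$ into an exponential factor so that the quotient becomes holomorphic.

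First, I would fix an isolated point $z_0 \in U$ (or an arbitrary point, to prove the zeros are isolated) and work on a small closed disk $\overline{D} \subset U$ centered there. On this disk, define $a(z) = f_{\bar z}(z)/f(z)$ where $f(z) \ne 0$ and $a(z) = 0$ otherwise; then $a \in L^\infty(D)$ with $\norm{a}_\infty \le \max_{\overline{D}} \mu =: M < \infty$, and $f_{\bar z} = a f$ holds a.e.\ on $D$ (it holds trivially at zeros of $f$ since there $|f_{\bar z}| \le \mu |f| = 0$). Next I would introduce the Cauchy transform
\begin{equation}
\phi(z) = -\frac{1}{\pi}\iint_{D} \frac{a(w)}{w - z}\, du\, dv, \qquad w = u + iv,
\end{equation}
which is continuous on $\c$ and satisfies $\phi_{\bar z} = a$ in the distributional sense on $D$ (this is the standard solution operator for the $\bar\partial$-equation, $\frac{1}{\pi}\iint \frac{1}{w-z}$ being the fundamental solution of $\partial_{\bar z}$). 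Then I would consider $h(z) = e^{-\phi(z)} f(z)$. A direct computation gives $h_{\bar z} = e^{-\phi}(f_{\bar z} - \phi_{\bar z} f) = e^{-\phi}(af - af) = 0$, so $h$ is holomorphic on $D$. Since $e^{-\phi}$ is continuous and nowhere zero, $f = e^{\phi} h$ shares exactly the zero set of $h$.

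Now the conclusions follow from elementary complex analysis applied to $h$: a holomorphic function on a disk either vanishes identically (hence $f \equiv 0$ there, and by a standard connectedness argument $f \equiv 0$ on all of $U$) or has isolated zeros, each of finite order $k$, so that $h(z) = (z - z_0)^k \tilde{g}(z)$ with $\tilde{g}$ holomorphic and non-vanishing near $z_0$; setting $g(z) = e^{\phi(z)} \tilde{g}(z)$, which is continuous and non-vanishing, yields \eqref{eq:ceros}. The one delicate point — and the main obstacle — is the regularity of $\phi$: one must verify that the Cauchy transform of a merely bounded function is continuous (Hölder continuous, in fact) and that the identity $\phi_{\bar z} = a$ holds in the sense needed to conclude $h_{\bar z} = 0$ classically; this requires Weyl's lemma (a distribution annihilated by $\partial_{\bar z}$ is holomorphic) rather than a naive pointwise differentiation, since $a$ is not assumed continuous. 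Since this is a classical result, I would cite \cite[Lemma 2.7.1, pp 75]{J} (equivalently \cite{HW}) for these analytic details rather than reproduce them, and emphasize in the write-up only the algebraic identity $h = e^{-\phi} f$ and the transfer of the zero structure.
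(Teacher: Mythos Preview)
The paper does not prove this lemma at all; it is quoted from \cite[Lemma 2.7.1, p.~75]{J} (originally \cite{HW}) and used as a black box. Your sketch is the standard Carleman--Bers--Vekua similarity principle argument that underlies those references---solve $\phi_{\bar z}=a$ via the Cauchy transform so that $h=e^{-\phi}f$ is holomorphic, then read off the zero structure of $f$ from that of $h$---so your approach is correct and is precisely the argument the paper defers to.
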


Our aim in this work is to show that the results in can be adapted to the general
setting of Codazzi pairs (see Theorem \ref{th:choe} for a precise statement, and Theorems \ref{th:special} and \ref{th:KIproducto} for its apliccations).
In particular, applied to CMC surfaces in $\s^2\times \r$ or $\h ^2 \times \r$ this result gives the do
Carmo-Fern\'{a}ndez's Theorem. Moreover, it also generalizes the Martínez-G\'{a}lvez's Theorem to piecewise smooth boundary,
and extends to other homogeneous product spaces.\\

It should be remarked that Lemma \ref{lem:cauchy} has been also a fundamental tool in the
classification of topological spheres with either constant positive extrinsic
curvature in $\hr$ and $\sr$ (see \cite{EGR}), or constant mean curvature
$H>1/\sqrt{3}$ in ${\rm Sol}_3$ (see \cite{DM}). Furthermore, it would not be
surprising if Lemma \ref{lem:cauchy} is a key for the classification of topological
spheres with either constant positive extrinsic curvature or constant positive
Gaussian curvature in the other homogeneous spaces, or with constant mean curvature
$H>0$ in ${\rm Sol}_3$.

%
%
%

\section{Preliminaries}\label{sec:prelim}

First, we need to state some basic facts on Codazzi pairs (see \cite{Mi} for
details).

Let $\Sigma $ be an orientable surface and $(A,B)$ a pair of real quadratic forms on
$\Sigma$ such that $A$ is a Riemannian metric and $B$ a quadratic form, such a pair
$(A,B)$ is called a {\it fundamental pair}. Associated to a fundamental pair
$(A,B)$, we define the {\em shape operator} of the pair as the unique self-adjoint
endomorphism of vector fields $S$ on $\Sigma$ given by
$$ B(X,Y) = A(SX,Y), \quad X,Y \in\campo (\Sigma).$$

We define the mean curvature, the extrinsic curvature and the principal curvatures
of $(A,B)$ as half the trace, the determinant and the eigenvalues of the endomorphism
S, respectively.

In particular, given local parameters $(u,v)$ on $\Sigma$ such that
$$
A=E\,du^2+2F\,dudv+G\,dv^2,\qquad B=e\,du^2+2f\,dudv+g\,dv^2,
$$
the mean curvature and the extrinsic curvature of the pair are given, respectively,
by
\begin{eqnarray}
H(A,B)&=&\frac{E g-2F f+G e}{2(EG-F^2)}, \label{CM}\\
K_e(A,B)&=&\frac{eg-f^2}{EG-F^2},\label{CE}
\end{eqnarray}and the skew curvature
\begin{equation}\label{CS}
q(A,B)= H(A,B)^2 -K_e(A,B) .
\end{equation}

Associated to any fundamental pair $(A,B)$  we define the \emph{lines of curvature}
form $W=W(A,B)$ by
\begin{equation}
\sqrt{EG-F^2} W = (E f - F e)\, du^2 +(E g - G e)\,du dv +(F g-G f) dv^2 .
\end{equation}

The integral curves for the equation $W=0$ are level lines for doubly orthogonal
coordinates and they are called \emph{lines of curvature associated to the pair
$(A,B)$}. Umbilic points are those where $W \equiv 0$ for all values of $du$ and
$dv$ or, equivalently, if $q=0$ at this point. Thus, a fundamental pair $(A,B)$ is
said to be {\it totally umbilical} if $q(A,B)$ vanishes identically on $\Sigma$. If
we take $z = u+ i v$ a local conformal parameter for the Riemannian metric $A$, we
can define
$$Q=Q(A,B):=\frac{1}{4}(e-g -2 i f)\,dz ^2 ,$$
that is, $Q$ is the $(2,0)-$part of the complexification of $B$ for the conformal
structure given by $A$. Also, in this setting, it is easy to check that
$$ -2 \, {\rm Im} (Q) = W $$ thus, the \emph{lines of curvature} associated
with a fundamental pair $(A,B)$ are given by
$$ {\rm Im} (Q)=0 .$$


Now, we recall the definition of the {\it Codazzi Tensor and Function} associated to
any fundamental pair $(A,B)$ introduced in \cite{AEG3} (see also \cite{Es}).

\begin{definicion}\label{d1}
Given a fundamental pair $(A,B)$, we define its Codazzi Tensor as
$$ T_S : \campo (\Sigma) \times \campo (\Sigma) \To \campo (\Sigma),$$
\begin{equation*}
T_S (X,Y) = \nabla _X  SY - \nabla _Y SX - S [X,Y ] \, , \; X, Y \in \campo(\Sigma),
\end{equation*}here $\nabla $ stands for the Levi-Civita connection associated to
$A$ and $S$ denotes the shape operator.

Moreover, we define the {\bf Codazzi function associated to $S$} as
\begin{equation*}
\begin{matrix}
\mathcal{T}_S & : & \Sigma & \To & \r \\
\mbox{} & \mbox{} &  p & \longmapsto & \dfrac{\norm{T_S(X_p , Y_p)}^2}{\norm{X_p
\wedge Y_p}^2}
\end{matrix}
\end{equation*}where $X, Y \in \campo (U)$ are linearly independent vector fields
defined in a neighborhood $U$ of $p$ and
\begin{equation*}
\norm{X \wedge Y}^2  := \norm{X}^2\norm{Y}^2 - A({X},{Y})^2 ,
\end{equation*}$\norm{ \cdot }$ is the norm with respect to $A$.
\end{definicion}

It is easy to check that the Codazzi Tensor is skew-symmetric and the Codazzi
function is well defined.

\begin{definicion}
A fundamental pair $(A,B)$ is said to be a Codazzi pair if its associated Codazzi
Tensor vanishes identically. This means it satisfies  the classical Codazzi
equations for surfaces in a 3-dimensional space form, that is,
\begin{equation}
\label{ecuacionCodazzi}
e_v-f_u=e\Gamma_{12}^1+f(\Gamma_{12}^2-\Gamma_{11}^1)-g\Gamma_{11}^2,\quad
f_v-g_u=e\Gamma_{22}^1+f(\Gamma_{22}^2-\Gamma_{12}^1)-g\Gamma_{12}^2,
\end{equation}
where $\Gamma_{ij}^k$ are the Christoffel symbols for the Riemannian metric $A$
w.r.t. the Riemannian connection of $A$.
\end{definicion}

Now, we establish a result (proved in \cite{AEG3}, see also \cite{Es}) that relates
the Codazzi function associated to $(A,B)$ to the quadratic differential $Q$ defined
before.

\begin{lema}\label{l1.2}
Let $(A,B)$ be a fundamental pair on $\Sigma$ with associated shape operator $S$.
Set $\tilde{S} = S - H\, {\rm Id}$, where $H = H(A,B)$. Then
\begin{equation*}
\mathcal{T}_{\tilde{S}} =  2 \frac{|Q_{\zb}|^2}{\lambda ^3} ,
\end{equation*}or equivalently
\begin{equation*}
\abs{ Q _{\zb}}^2 = \frac{\lambda }{2} \frac{\mathcal{T}_{\tilde{S}}}{q} |Q|^2 ,
\end{equation*}where $z$ is a local conformal parameter for $A$, i.e., $A = 2 \lambda \,
|dz|^2$, and $Q$ is the $(2,0)-$part of the complexification
of B for the conformal structure given by $A$. Moreover, if $(A,B)$ is Codazzi, then
$$ \mathcal{T}_{\tilde{S}} = \norm{d H}^2 .$$
\end{lema}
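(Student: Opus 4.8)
The plan is to work in a conformal parameter $z=u+iv$ for $A$, so that $E=G=2\lambda$ and $F=0$, and to reduce everything to the trace-free part of $B$. Set $\tilde B(X,Y)=B(X,Y)-H\,A(X,Y)$, the quadratic form associated with $\tilde S=S-H\,\mathrm{Id}$; since $H=(e+g)/(4\lambda)$ one gets $\tilde e=-\tilde g$ and $\tilde f=f$, so $\tilde B$ is trace-free. As $Q$ depends only on $e-g$ and $f$, we have $Q(A,B)=Q(A,\tilde B)=\tfrac12(\tilde e-i\tilde f)\,dz^2$. Differentiating and using $\partial_{\bar z}=\tfrac12(\partial_u+i\partial_v)$ together with $\tilde g=-\tilde e$ gives the compact formula $4\,Q_{\bar z}=(\tilde e_u+\tilde f_v)+i(\tilde e_v-\tilde f_u)$, hence $16\,|Q_{\bar z}|^2=(\tilde e_u+\tilde f_v)^2+(\tilde e_v-\tilde f_u)^2$.

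Next I would compute $T_{\tilde S}(\partial_u,\partial_v)=\nabla_{\partial_u}(\tilde S\partial_v)-\nabla_{\partial_v}(\tilde S\partial_u)$ by hand. Writing $\tilde S\partial_u=\tfrac1{2\lambda}(\tilde e\,\partial_u+\tilde f\,\partial_v)$ and $\tilde S\partial_v=\tfrac1{2\lambda}(\tilde f\,\partial_u-\tilde e\,\partial_v)$, and inserting the Christoffel symbols of the conformal metric $2\lambda\,|dz|^2$, namely $\Gamma_{11}^1=\Gamma_{12}^2=-\Gamma_{22}^1=\tfrac{\lambda_u}{2\lambda}$ and $-\Gamma_{11}^2=\Gamma_{12}^1=\Gamma_{22}^2=\tfrac{\lambda_v}{2\lambda}$, all the terms containing derivatives of $\lambda$ cancel — this cancellation, which uses $\tilde e+\tilde g=0$, is the only delicate point of the argument — and one is left with
\[
T_{\tilde S}(\partial_u,\partial_v)=-\frac1{2\lambda}\big[(\tilde e_v-\tilde f_u)\,\partial_u+(\tilde e_u+\tilde f_v)\,\partial_v\big].
\]
Since $A(\partial_u,\partial_u)=A(\partial_v,\partial_v)=2\lambda$, $A(\partial_u,\partial_v)=0$ and $\norm{\partial_u\wedge\partial_v}^2=EG-F^2=4\lambda^2$, this yields
\[
\mathcal{T}_{\tilde S}=\frac{(\tilde e_v-\tilde f_u)^2+(\tilde e_u+\tilde f_v)^2}{8\lambda^3}=\frac{16\,|Q_{\bar z}|^2}{8\lambda^3}=\frac{2\,|Q_{\bar z}|^2}{\lambda^3},
\]
which is the first identity.

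For the equivalent formulation I would note that $|Q|^2=\tfrac14(\tilde e^2+\tilde f^2)$, while $q=H^2-K_e=-\det\tilde S=(\tilde e^2+\tilde f^2)/(4\lambda^2)$ because $\tilde S$ is trace-free; hence $|Q|^2=\lambda^2 q$, and substituting this into the first identity gives $|Q_{\bar z}|^2=\tfrac{\lambda}{2}\,\tfrac{\mathcal{T}_{\tilde S}}{q}\,|Q|^2$, the quotient $|Q|^2/q$ being understood as the constant $\lambda^2$. Finally, for the Codazzi case, torsion-freeness of $\nabla$ gives for any function $H$ the identity $T_{\tilde S}(X,Y)=T_S(X,Y)+(YH)X-(XH)Y$; if $(A,B)$ is Codazzi then $T_S\equiv0$, so $T_{\tilde S}(X,Y)=dH(Y)X-dH(X)Y$, and evaluating $\mathcal{T}_{\tilde S}$ on an $A$-orthonormal frame $\{X,Y\}$ yields $\mathcal{T}_{\tilde S}=dH(X)^2+dH(Y)^2=\norm{dH}^2$. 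The bulk of the work is the explicit expansion of $T_{\tilde S}(\partial_u,\partial_v)$ and the cancellation of the connection terms; the rest is algebraic bookkeeping.
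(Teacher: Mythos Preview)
Your proof is correct and follows essentially the same strategy as the paper's: both compute the Codazzi tensor of $\tilde S$ in a conformal parameter for $A$ and identify the result with $Q_{\bar z}$, then use $|Q|^2=\lambda^2 q$ for the second identity and $T_{\tilde S}=-T_{H\,\mathrm{Id}}$ for the Codazzi case. The only cosmetic differences are that the paper works with the complex frame $\partial_z,\partial_{\bar z}$ (which packages the Christoffel symbols more compactly) and computes $T_S$ first before subtracting $T_{H\,\mathrm{Id}}$, whereas you use the real frame $\partial_u,\partial_v$ and work directly with the trace-free form $\tilde B$, exploiting $\tilde e+\tilde g=0$ to obtain the cancellation of the connection terms.
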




\section{Totally umbilical disks}\label{sec:choe}

Our main theorem gives a sufficient condition for a disk-type surface (with non
regular boundary) to be totally umbilical for a given Codazzi pair. It is inspired
in the results of \cite{choe} and \cite{dCF}. We will give the proof of the theorem in Section \ref{sec:proof}.

\begin{teorema}\label{th:choe}
Let $\Sigma $ be a compact disk with piecewise differentiable boundary. We will call the \emph{vertices} of the surface to the finite set of non-regular boundary
points. Assume also that $\Sigma $ is contained as an interior set in a
differentiable surface $\tilde{\Sigma}$ without boundary.

Let $(A,B)$ be a Codazzi pair on $\tilde \Sigma$. Assume that the following
conditions are satisfied:
\begin{enumerate}
\item On $\tilde{\Sigma}$ we have $\,\norm{dH} \leq h \sqrt{q}\,$, where $H$ and $q$ are the
mean and skew curvature of the pair $(A,B)$, $h$ is a continuous function, and
$\norm{ \cdot }$ means the norm with respect to the metric $A$.
\item The number of vertices in $\partial \Sigma $ with angle $<\pi$ (measured with respect to
the metric $A$) is less than or equal to $3$.
\item The regular curves in $\partial \Sigma $ are lines of curvature for the pair $(A,B)$.
\end{enumerate}
Then, $\Sigma$ is totally umbilical for the pair $(A,B)$.
\end{teorema}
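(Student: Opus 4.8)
The strategy is to study the quadratic differential $Q = Q(A,B)$, which vanishes exactly at the umbilic points, and to show it vanishes identically. First I would invoke Lemma \ref{l1.2}: since $(A,B)$ is Codazzi, $\mathcal{T}_{\tilde S} = \norm{dH}^2$, and therefore $\abs{Q_{\bar z}}^2 = \frac{\lambda}{2}\,\frac{\norm{dH}^2}{q}\,\abs{Q}^2$ at non-umbilic points. Hypothesis (1), $\norm{dH}\le h\sqrt{q}$, then gives $\abs{Q_{\bar z}}\le \mu(z)\,\abs{Q}$ for a continuous (locally bounded) $\mu$, first on the open set where $q>0$; one checks this extends across umbilic points (where both sides behave well) so that $Q$ satisfies the hypothesis of Lemma \ref{lem:cauchy} on all of $\tilde\Sigma$. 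Hence either $Q\equiv 0$ on the connected surface $\tilde\Sigma$ — in which case $\Sigma$ is totally umbilical and we are done — or the zeroes of $Q$ are isolated and at each such zero $z_0$ we have $Q(z) = (z-z_0)^k\,g(z)\,dz^2$ with $g$ continuous and non-vanishing, so $Q$ has a well-defined positive \emph{index} $k$ there.

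The heart of the argument is then a Poincaré–Hopf / index count on the disk $\Sigma$ for the line field $\mathrm{Im}(Q)=0$, i.e. the lines of curvature of $(A,B)$. By hypothesis (3) the regular boundary arcs of $\partial\Sigma$ are lines of curvature, so the line field is tangent to $\partial\Sigma$ along the smooth part of the boundary. The plan is to apply the standard index formula for a line field on a disk whose boundary is (piecewise) a leaf: letting $I_{\mathrm{int}}$ be the sum of the indices of $Q$ at its interior zeroes (each a positive half-integer multiple, in fact each interior zero of the quadratic differential contributes index $k/2 \ge 1/2$, and since $k$ is a positive integer, $\ge 1/2$; for a genuine quadratic differential the interior index is $k/2$), and accounting for the exterior angles $\beta_i = \pi - \alpha_i$ at the vertices, one gets a relation of the shape
\begin{equation*}
\sum_{\text{interior zeroes}} \frac{k_j}{2} \;+\; \sum_{\text{boundary zeroes}} (\text{non-negative}) \;=\; \chi(\Sigma) \;-\; \frac{1}{2\pi}\sum_i (\pi-\alpha_i) \;=\; 1 - \frac{1}{2}\,\#\{\alpha_i<\pi\} + (\text{non-negative from }\alpha_i>\pi).
\end{equation*}
More carefully, since $Q$ could also vanish at the vertices or along the boundary, I would handle the boundary contributions as in \cite{choe} and \cite{dCF}: a vertex with interior angle $\alpha$ forces, via the behaviour of the line field there, a local contribution to the count that is bounded below in terms of $\lceil \alpha/\pi\rceil$-type quantities, and the key inequality is that the total index of $Q$ on the closed disk is at most $1 - \frac12 \cdot(\text{number of vertices with }\alpha<\pi)$. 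With hypothesis (2) — at most $3$ such vertices — the right-hand side is at most $1 - 3/2 < 0$.

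This is the contradiction: the left-hand side is a sum of indices of $Q$, each strictly positive (interior zeroes contribute $\ge 1/2$, boundary/vertex zeroes contribute $\ge$ something non-negative, and a convexity vertex itself can only lower the bound), so it cannot be negative. More precisely: if $Q\not\equiv 0$ it has at least the vertex contributions to account for and the arithmetic gives $0 \le (\text{positive total index}) \le 1 - \frac32 < 0$, impossible. Hence $Q\equiv 0$ on $\tilde\Sigma$, so $q\equiv 0$ on $\Sigma$ and $\Sigma$ is totally umbilical for $(A,B)$.

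\textbf{Main obstacle.} The delicate point is the boundary/vertex bookkeeping in the index formula. One must make precise how a line field that is tangent to the two regular arcs meeting at a vertex of interior angle $\alpha_i$ behaves near that vertex — in particular showing that a vertex with $\alpha_i < \pi$ "costs" at least $1/2$ in the index balance while vertices with $\alpha_i \ge \pi$ and boundary zeroes never contribute negatively — and to handle the possibility that $Q$ vanishes \emph{at} a vertex or along a boundary arc. This requires the local analysis of the zeroes of $Q$ from Lemma \ref{lem:cauchy} combined with a careful version of the Poincaré–Hopf theorem for line fields on surfaces with corners, exactly in the spirit of \cite{choe,dCF}; adapting that bookkeeping to the abstract Codazzi-pair setting (where $A$ need not be the induced metric) is the technical core of the proof, but it goes through because everything is expressed intrinsically in terms of $(A,B)$ and the conformal structure of $A$.
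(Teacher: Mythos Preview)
Your overall strategy --- use Lemma~\ref{l1.2} and hypothesis~(1) to feed $Q$ into Lemma~\ref{lem:cauchy}, get isolated umbilics with local form $(z-z_0)^k g(z)\,dz^2$, then run a Poincar\'e--Hopf count for the line field of lines of curvature on the disk --- is exactly the paper's. The gap is in the index arithmetic, and it makes the argument fail as written. The rotation index of the line field $\mathrm{Im}\,Q=0$ at an interior zero of order $k$ is $-k/2$, not $+k/2$: interior umbilics (and, after the reflection trick, regular boundary zeroes and vertices of angle $\ge\pi$) contribute \emph{negatively} to the sum $\sum I(p)=1$. Only vertices with angle $\theta<\pi$ can contribute positively, and the paper shows (via the doubled index $I^\ast=1-\tfrac{\theta}{2\pi}(k+2)$ together with the constraint that $\theta$ must be a multiple of $\pi/(k+2)$) that each such contribution is at most $+1/4$. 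The contradiction is then $1=\sum I(p)\le 3\cdot\tfrac14<1$.

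Your version instead posits that each convex vertex ``costs at least $1/2$'' and arrives at $(\text{non-negative sum})\le 1-\tfrac{n}{2}$. But the hypothesis is $n\le 3$, which gives $1-\tfrac{n}{2}\ge -\tfrac12$, not $\le -\tfrac12$; your inequality yields nothing when $n\in\{0,1,2\}$, so the argument only closes if there are \emph{exactly} three convex vertices, which is not what is assumed. The ``$1/2$ per vertex'' is neither justified nor the right constant. The correct bound $1/4$ per convex vertex, combined with the negativity of every other contribution, is precisely what makes the count work uniformly for all $n\le 3$; this is the content of the paper's Claim~2 and is the piece your sketch is missing.
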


It should be remarked that, even the outline of the proof follows the ideas in \cite{choe}, our hypothesis on the regularity on the surfaces is a bit more restrictive. Indeed, in \cite{choe} the Codazzi pair is assumed to be extended around the points regular points of the boundary, but (possibly) not around the vertices. However, in that case the holomorphicity of the Hopf differential makes possible to control the behavior at these points, whereas in the general case of a non holomorphic differential $Q$ it is necessary to impose more regularity to obtain a estimate of the rotation index at the vertices. This is the same strategy employed in \cite{dCF}. 

As a particular case of Theorem \ref{th:choe} we can consider the
case of an immersed surface $\Sigma \subset \m ^2 (\eps )\times \r$, where
$\m^2(\eps)$ denotes the complete simply connected surface of constant curvature
$\eps\in \set{+1,0,-1}$.

Denote by $h$ the height function of the surface, $H$ its mean curvature,
and $(I,II)$ the first and second fundamental forms respectively. Then if we define
$$ B= 2H \, II - \eps dh ^2 + \frac{\eps}{2}\norm{\nabla h}^2 I ,$$
it is easy to check that the pair $(I,B)$ is a Codazzi pair when $H$ is
constant. Note that when $\eps = 0$, $B$ is nothing
but a constant multiple of the second fundamental form. Our theorem then implies the
result in \cite{dCF} when the surface has constant mean curvature (notice that in
this case the first hypothesis trivially holds).

\begin{remark}
We should point out that the $(2,0)-$part of the complexification of $B$ for the
conformal structure given by $I$ agrees with the Abresch-Rosenberg differential (see
\cite{AR} and \cite{AR2}). In these papers, it is proved that this
differential is holomorphic for the conformal structure given by $I$ if the mean
curvature, $H$, of the surface is constant.

Moreover, the gradient term in the definition of $B$ makes that $H(I,B)=2H^2$. Thus,
we can apply \cite[Lemma 6]{Mi} for ensuring:
\begin{center}
$(I,B)$ is Codazzi if, and only if, the Abresch-Rosenberg differential is
holomorphic.
\end{center} This is reason for adding the gradient term in the
definition of $B$, to use the Codazzi pair theory.
\end{remark}

To prove the result in \cite{dCF} in all generality, we establish Theorem
\ref{th:choe} in a more general version. Actually, we have imposed the condition to
be Codazzi in Theorem \ref{th:choe} because we can not control the Codazzi Tensor of
a surface immersed in a general three-manifold. But, it is still possible in a
homogeneous three-manifold since it depends on the mean curvature and the height
function.


\section{Applications to surfaces in homogeneous 3-manifolds}

Throughout this section, $\Sigma $ will denote a compact disk immersed in a
homogeneous three-manifold.


We will assume that $\Sigma$ has piecewise differentiable boundary. We will call the \emph{ vertices} of the
surface the (finite) set of non-regular boundary points. We will also assume that
$\Sigma $ is contained in the interior of a differentiable surface $\tilde{\Sigma}$
without boundary.

\subsection{Surfaces in space forms}

Let $\m^3(\eps)$ denote the complete simply connected Riemannian $3$-manifold of
constant curvature $\eps$. That is, $\m^3(\eps)$ is $\s ^3$ if $\eps =1$, $\r ^3$ if
$\eps =0$, or $\h ^3$ if $\eps =-1$. It is well known that for an immersed surface
$\Sigma$ in $\m^3(\eps)$ the first and second fundamental forms, $I$ and $II$, are
a Codazzi pair. Moreover, the mean, extrinsic and skew curvature, as well as the
lines of curvature for the pair $(I,II)$ agree with the usual definitions for an
immersed surface.

The classification of totally umbilical surfaces in these spaces is well-known (see
\cite{spi}). Roughly speaking, they are part of round spheres, planes, or
horospheres (in the case of $\m^3(-1)=\h^3$).

We say that an immersed surface $\Sigma\subset\m^3(\eps)$ is a {\em special
Weingarten} surface if its mean curvature is of the form $H= f(H^2-K_e)$ for a
smooth function $f$, where $K_e$ is the extrinsic curvature. This class of surfaces
includes the case of constant mean curvature surfaces for $H=f(q):=c$, as well as
the surfaces with constant positive extrinsic curvature (equivalently, constant
Gaussian curvature, since in this setting the Gaussian curvature, $K$, is
$K=K_e+\eps$) for $H=f(q):=\sqrt{q+c}$, in both cases $c$ is the positive constant so
that $H=c$ or $K_e=c$.

Then, we have:
\begin{teorema}\label{th:special}
Let $\tilde{\Sigma} \subset \m ^3 (\eps )$ be a special Weingarten surface, and $\Sigma\subset\tilde{\Sigma}$ a compact disk with the regularity
conditions assumed at the beginning of the Section. Assume also that the following conditions are satisfied:
\begin{enumerate}
\item The number of vertices in $\partial \Sigma $ with angle $<\pi$ is less than or equal to $3$.
\item The regular curves in $\partial \Sigma $ are lines of curvature.
\end{enumerate}
Then, $\Sigma $ is a piece of one of the totally umbilical surfaces described above.
\end{teorema}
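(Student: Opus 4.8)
\textbf{Proof proposal for Theorem \ref{th:special}.}

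The plan is to reduce Theorem \ref{th:special} to Theorem \ref{th:choe} by exhibiting an appropriate Codazzi pair on $\tilde\Sigma$ whose umbilic points coincide with the umbilic points of $\Sigma$, and whose mean and skew curvatures satisfy the gradient estimate of hypothesis $(1)$ in Theorem \ref{th:choe}. The natural candidate is simply the pair $(I,II)$ of first and second fundamental forms, which is already a Codazzi pair in $\m^3(\eps)$ by the classical Codazzi equations. For this pair the mean curvature $H(I,II)$ is the usual $H$, the extrinsic curvature $K_e(I,II)$ is the usual $K_e$, and the skew curvature is $q = H^2 - K_e$; moreover the lines of curvature for $(I,II)$ are the ordinary lines of curvature of the immersed surface, so hypothesis $(3)$ of Theorem \ref{th:choe} is exactly hypothesis $(2)$ here, and hypothesis $(2)$ of Theorem \ref{th:choe} is exactly hypothesis $(1)$ here (the angles being measured in $I$, which is the induced metric). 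Umbilic points for the pair $(I,II)$ are the points where $q$ vanishes, i.e. the usual umbilics.

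The only thing left to check is hypothesis $(1)$ of Theorem \ref{th:choe}, namely that $\norm{dH}\le h\sqrt{q}$ for some continuous function $h$ on $\tilde\Sigma$. This is where the special Weingarten condition enters. Since $H = f(q)$ with $f$ smooth, the chain rule gives $dH = f'(q)\,dq$ on $\tilde\Sigma$, hence $\norm{dH} = |f'(q)|\,\norm{dq}$. Thus it suffices to bound $\norm{dq}$ by (a continuous function times) $\sqrt q$. On the open set where $q>0$ this is automatic with $h = |f'(q)|\,\norm{dq}/\sqrt q$, which is continuous there; the issue is behaviour near the zero set of $q$ (the umbilic set). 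Here one uses that $q\ge 0$ is a smooth nonnegative function on $\tilde\Sigma$, so at any point where $q=0$ the function $q$ attains a minimum, forcing $dq=0$ there; a standard argument (writing $q$ in local coordinates and applying Taylor's theorem, or invoking the elementary inequality $\norm{d\varphi}\le C\sqrt{\varphi}$ valid locally for any smooth nonnegative $\varphi$ with bounded second derivatives) yields $\norm{dq}\le C\sqrt q$ locally around each such point. Patching these local bounds with a compactness argument on any compact neighbourhood of $\Sigma$ in $\tilde\Sigma$, and combining with the bound on the region $q>0$, produces the desired continuous $h$. The two specific subcases in the statement — $H\equiv c$ (where $f'\equiv 0$, so $dH\equiv 0$ and the estimate is trivial) and $H=\sqrt{q+c}$ with $c>0$ (where $f'(q) = \tfrac12(q+c)^{-1/2}$ is bounded) — are instances of this, and in the second one one may even take $h$ bounded.

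With hypothesis $(1)$ verified, Theorem \ref{th:choe} applies to the Codazzi pair $(I,II)$ on $\tilde\Sigma$ and yields that $\Sigma$ is totally umbilical for $(I,II)$, i.e. $q\equiv 0$ on $\Sigma$: every point of $\Sigma$ is an umbilic point of the immersion. Finally one invokes the classification of totally umbilical surfaces in $\m^3(\eps)$ (see \cite{spi}): a connected surface all of whose points are umbilic is an open piece of a round sphere, a plane, or — when $\eps=-1$ — a horosphere or an equidistant surface. Since $\Sigma$ is connected (a disk), it is a piece of one of these, which is the assertion. I expect the main obstacle to be the careful verification of the gradient estimate near the umbilic set: one must be sure the local inequality $\norm{dq}\le C\sqrt q$ holds with constants controllable uniformly on a compact neighbourhood of $\Sigma$, and that $f'$ composed with $q$ remains bounded there; once this analytic point is settled the rest is a direct application of the main theorem together with the classical umbilic classification.
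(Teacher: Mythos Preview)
Your overall strategy---apply Theorem~\ref{th:choe} to the Codazzi pair $(I,II)$ and then invoke the classification of totally umbilical surfaces---is exactly the paper's, and the identification of hypotheses $(2)$ and $(3)$ of Theorem~\ref{th:choe} with the hypotheses here is correct.

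Where you diverge is in the verification of the gradient estimate $\norm{dH}\le h\sqrt q$. You argue abstractly: $q\ge 0$ is smooth, so by a Glaeser-type inequality $\norm{dq}\le C\sqrt q$ on compact sets, whence $\norm{dH}=|f'(q)|\,\norm{dq}\le C|f'(q)|\sqrt q$. This is valid (the inequality $|\nabla\varphi|^2\le 2M\varphi$ for $C^2$ nonnegative $\varphi$ with Hessian bounded by $M$ is standard), though it requires shrinking $\tilde\Sigma$ to a relatively compact neighbourhood of $\Sigma$ to get uniform constants. The paper instead exploits the concrete identity $|Q|^2=\lambda^2 q$: writing $dq=\lambda^{-2}\bigl(d(|Q|^2)-q\,d(\lambda^2)\bigr)$ and bounding $\norm{d(|Q|^2)}\le |Q|\bigl(\norm{dQ}+\norm{d\bar Q}\bigr)=\lambda\sqrt q\,\bigl(\norm{dQ}+\norm{d\bar Q}\bigr)$ yields an explicit continuous $h$ directly, with no compactness or Taylor argument needed. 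Your route is more general (it uses nothing about $q$ beyond smooth nonnegativity), while the paper's is more explicit and self-contained within the Codazzi-pair formalism already set up.
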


\begin{proof}
Let us show that there exists a continuous function $h$ such that $\norm{dH} \leq h
\sqrt{q}$. Since $H=f(H^2-K_e)=f(q)$, then
$$dH=f'(q)\,dq=\frac{f'(q)}{\lambda^2}(d(|Q|^2)-d(\lambda^2)q),$$
where we have used that $|Q|^2=\lambda^2 q$. Now just observe that
$\norm{d(|Q|^2)}=\norm{d(Q\bar{Q})} \leq |Q| (\norm{dQ} + \norm{d\bar{Q}}) $, and so
the desired condition holds for $h=\frac{|f'(q)|}{\lambda}\big(\norm{dQ} +
\norm{d\bar{Q}} + 2\sqrt{q}\,d\lambda\big)$.

Thus, the result follows from Theorem \ref{th:choe} applied to the Codazzi pair
$(I,II)$ and the classification of totally umbilical surfaces in space forms
described in \cite{spi}.
\end{proof}

The above theorem generalizes the result in
\cite{GM} regarding positive constant Gaussian
curvature surfaces in $\R^3$ with everywhere regular boundary.

\subsection{$K$-surfaces in product spaces}

Now we will deal with immersed disks in the product space $\m^2(\varepsilon)\times
\r$, where $\m^2(\varepsilon)$ denotes $\s^2$ if $\varepsilon=1$, $\r ^2$ if $\eps
=0$ or $\h^2$ if $\varepsilon=-1$.

Let $\Sigma \subset \m^2(\varepsilon)\times\r$ be an orientable immersed surface
with unit normal vector field $N$ and let $I$ and $II$ be the first and second
fundamental forms, $K$ its Gaussian curvature. We denote by $h:\Sigma \to\R$ the
height function of the immersion, that is, the restriction to the surface of the
canonical projection $\mm\to\R$. It is easy to show that, if we denote by $\xi$ the
unit vertical vector field in $\mm$ (that is, the Killing field $\xi = \partial
_t$), then
\begin{equation}\label{eq:h}
\xi=\nabla h + \nu N,
\end{equation}where $\nu=\langle \xi,N\rangle$.\\

If we assume that $K\neq \varepsilon$ at every point on $\Sigma $, we can define the
new quadratic form
\begin{equation}\label{A}
A=I+\frac{\eps}{K-\eps}\,dh^2.
\end{equation}

Moreover, if $K>\mbox{max}\{0,\eps\}$ then $A$ is a Riemannian metric.

\begin{definicion}
Let $\Sigma\subset \m^2(\varepsilon)\times\r$ be an immersed surface with constant
Gaussian curvature $K$. We will say that $\Sigma$ is a $K-$surface in
$\m^2(\varepsilon)\times\r$ if $K>\mbox{max}\{0,\eps\}$.
\end{definicion}

In \cite{AEG1}, the authors show that the pair $(A,II)$ is Codazzi for a $K-$surface
and its extrinsic curvature is
$$K_e(A,II)=K-\eps ,$$
when $\eps \in \{-1,1\}$. Note that when $\eps =0$, then $\m ^2 (0)\times \r \equiv
\r^3$ and this case was discussed above. In this case, the $(2,0)$-part of $A$ with
respect to the conformal structure induced by $II$, $Q(II,A)$, is holomorphic on
$\Sigma$ (\cite[Lemma 8]{Mi}). This is the main tool for the classification given in
\cite{AEG1} of the complete $K$-surfaces. It turns out that these surfaces are
rotationally invariant spheres, extending the previously known classification in the
euclidean case.

\begin{teorema}\label{th:KIproducto}
Let $\, \tilde{\Sigma} \subset \m ^2 (\eps )\times \r\,$ be a $K-$surface, and $\,\Sigma\subset\tilde{\Sigma}\,$ a compact disk with the
regularity conditions stated at the beginning of the section. Assume that the
following conditions are satisfied:
\begin{enumerate}
\item The number of vertices in $\partial \Sigma $ with angle $<\pi$ (with respect to the metric $A$)
is less than or equal to $3$.
\item The regular curves in $\partial \Sigma $ are lines of curvature for the pair $(A,II)$.
\end{enumerate}
Then, $\Sigma $ is a piece of one of the rotational spheres described in \cite{AEG1}.
\end{teorema}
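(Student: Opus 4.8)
The plan is to reduce Theorem \ref{th:KIproducto} to Theorem \ref{th:choe} applied to the Codazzi pair $(A,II)$, where $A$ is the metric defined in \eqref{A}. The key point is that all the hypotheses of Theorem \ref{th:choe} are met. First I would recall from \cite{AEG1} that on a $K$-surface the pair $(A,II)$ is indeed a Codazzi pair, and that since $K>\max\{0,\eps\}$ the form $A$ is a genuine Riemannian metric, so the pair is fundamental. The second and third hypotheses of Theorem \ref{th:choe} are precisely the two conditions assumed in the present statement (the angle condition being measured with respect to $A$, as required, and the boundary curves being lines of curvature for $(A,II)$), so nothing needs to be done there. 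The only substantive point is to verify the first hypothesis, namely that $\norm{dH(A,II)}\le h\sqrt{q(A,II)}$ for some continuous function $h$, where now $H$, $q$ and $\norm{\cdot}$ all refer to the pair $(A,II)$ and the metric $A$.

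The mechanism for checking this is the same as in the proof of Theorem \ref{th:special}: since $K_e(A,II)=K-\eps$ is constant, one has $q(A,II)=H(A,II)^2-K_e(A,II)=H(A,II)^2-(K-\eps)$. Writing $|Q|^2=\lambda^2 q$ for $Q=Q(A,II)$ the $(2,0)$-part of $II$ with respect to the conformal structure of $A$ (so $A=2\lambda|dz|^2$ in a conformal parameter), one expresses $H(A,II)^2=q(A,II)+(K-\eps)$ and hence $dH(A,II)=\tfrac{1}{2H(A,II)}\,dq(A,II)$ away from points where $H(A,II)=0$; at such points $q(A,II)=-(K-\eps)<0$, which is impossible for a fundamental pair on an oriented surface with $K_e>0$, so in fact $H(A,II)^2\ge K-\eps>0$ everywhere and the division is legitimate. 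Then exactly as before $dq=\lambda^{-2}\big(d(|Q|^2)-q\,d(\lambda^2)\big)$ and $\norm{d(|Q|^2)}=\norm{d(Q\bar Q)}\le |Q|\big(\norm{dQ}+\norm{d\bar Q}\big)$, which yields a continuous bound $\norm{dH(A,II)}\le h\sqrt{q}$ with $h$ built from $|f'|$-type factors replaced here by $\tfrac{1}{2H(A,II)}$, together with $\norm{dQ}$, $\norm{d\bar Q}$, $d\lambda$ and $\lambda$; continuity follows since $H(A,II)$ is bounded away from zero and $\lambda>0$ on the compact disk. Alternatively, and more cleanly, one may invoke that $Q(A,II)$ is holomorphic (\cite[Lemma 8]{Mi}), so $Q_{\bar z}\equiv 0$, which by Lemma \ref{l1.2} forces $\mathcal{T}_{\tilde S}=\norm{dH(A,II)}^2=0$; in that case the first hypothesis of Theorem \ref{th:choe} holds trivially with $h\equiv 0$.

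Having verified all three hypotheses, Theorem \ref{th:choe} gives that $\Sigma$ is totally umbilical for the pair $(A,II)$, i.e.\ $q(A,II)\equiv 0$, equivalently $Q(A,II)\equiv 0$ on $\Sigma$. The final step is to translate this umbilicity statement into the geometric conclusion: a surface in $\m^2(\eps)\times\r$ for which $(A,II)$ is totally umbilical and $K_e(A,II)=K-\eps$ is constant and positive is, by the local classification carried out in \cite{AEG1}, a piece of one of the rotationally invariant $K$-spheres described there. So I would close the argument by citing the fact that the umbilical solutions of the relevant structure equations are exactly (pieces of) these rotational examples, which is precisely what appears in the complete case in \cite{AEG1}.

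I expect the main obstacle to be the bookkeeping in the first step: one must be careful that $H$, $q$, the Hopf-type differential $Q$, and the norm $\norm{\cdot}$ are all taken with respect to the \emph{pair $(A,II)$} and the metric $A$, not the first and second fundamental forms $(I,II)$ of the immersion — the abstract Codazzi-pair formalism of Section \ref{sec:prelim} is exactly what makes this legitimate, but the conformal factor $\lambda$, the parameter $z$, and the quantities $H(A,II)$, $q(A,II)$ all live on the $A$-side of the picture. The other point that needs a word is the non-vanishing of $H(A,II)$ used to divide by it; this is immediate from $K_e(A,II)=K-\eps>0$ and $q=H^2-K_e\ge 0$, but it is worth stating explicitly. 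Everything else is a direct application of Theorem \ref{th:choe} together with the results of \cite{AEG1}.
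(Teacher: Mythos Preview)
Your main argument is correct and follows exactly the paper's approach: recognise that $(A,II)$ is a Codazzi pair with constant extrinsic curvature $c=K-\eps>0$, write $H(A,II)=\sqrt{q+c}$, deduce $\norm{dH}\le h\sqrt{q}$ as in Theorem~\ref{th:special}, apply Theorem~\ref{th:choe}, and then invoke the classification in \cite{AEG1}. Your extra care in noting $H(A,II)\ge\sqrt{K-\eps}>0$ to justify the division is a nice detail the paper leaves implicit.

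One correction to your ``alternative'': it is $Q(II,A)$, the $(2,0)$-part of $A$ for the conformal structure of $II$, that is holomorphic by \cite[Lemma 8]{Mi} when $K_e(A,II)$ is constant, \emph{not} $Q(A,II)$. Holomorphicity of $Q(A,II)$ would (by \cite[Lemma 6]{Mi}, or equivalently Lemma~\ref{l1.2}) force $H(A,II)$ to be constant, which is not true for a general $K$-surface. So that shortcut does not work as stated; stick with the special-Weingarten estimate, which is what the paper does.
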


\begin{proof}
Since $\Sigma$ is a $K-$surface, $(A,II)$ is a Codazzi pair and
$c:=K_e(A,II)=K-\eps$ is a positive constant. Set $q:=q(A,II)$ and $H:=H(A,II)$,
then $H = f(q):=\sqrt{q+c}$. Thus, $\norm{d H} \leq h \sqrt{q}$, where $h$ is a
continuous function. Here, the norm $\norm{ \cdot }$ is with respect to $A$.

Thus, from Theorem \ref{th:choe}, $\Sigma$ is totally umbilical for the pair
$(A,II)$, and so also $Q(II,A)$ vanishes identically on $\Sigma$, giving that $\Sigma$ is
a piece of one of the complete examples described in \cite{AEG1}.
\end{proof}

In a recent paper \cite{ALP}, the authors deal with the regular case for
$K-$surfaces in product spaces, more precisely, they proved the following

\vspace{.3cm}

{\bf Theorem.} {\it Let $\Sigma \subset \m ^2 (\eps )\times \r$ be a compact surface
with positive constant Gaussian curvature $K>\eps $ and such that the imaginary part
of $Q(A,II)$ vanishes identically along $\partial \Sigma$, where $\partial \Sigma $
is a closed regular curve in $\m ^2 (\eps ) \times \r$. Then $\Sigma$ is a piece of
a rotational complete $K-$surface.}

\vspace{.3cm}

Thus, the above result is a particular case of Theorem \ref{th:KIproducto}.

Despite the case of surfaces in space forms, in the spaces $\mm$ the lines of curvature
for the Coddazi pair $(A,II)$  we work with do not agree in general with the
classical lines of curvature (except of course when $\eps=0$). In the following lemma
we give sufficient conditions on a curve in $\Sigma$ to be a line of curvature of
$(A,II)$. These conditions are inspired by those given in \cite{dCF} for the pair
$(I,B)$ defined in Section \ref{sec:choe}.

\begin{lema}\label{lem:ldc}
Let $\Sigma\subset\mm$ be a $K-$surface, and $\gamma\subset\Sigma$ a differentiable
curve. Assume that $\gamma$ satisfies one of the following conditions,
\begin{enumerate}
\item $\gamma$ is contained in a horizontal slice.
\item $\gamma$ is an integral curve of $\,\nabla h$, where $h$ is the
height function of $\Sigma$ (here $\nabla$ is the gradient with respect to $I$).
\end{enumerate}
Then $\gamma$ is a line of curvature for the pair $(A,II)$ if and only if it is a
line of curvature of $\Sigma$ in the classical sense.
\end{lema}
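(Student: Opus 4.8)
\textbf{Proof proposal for Lemma \ref{lem:ldc}.}

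The plan is to compute the lines-of-curvature form $W(A,II)$ explicitly along $\gamma$ and compare it with the lines-of-curvature form $W(I,II)$ of the classical second fundamental form. Recall from the preliminaries that for a fundamental pair, being a line of curvature means $W=0$ along the curve, equivalently $\mathrm{Im}(Q)=0$ along the curve for the associated quadratic differential. Since $A = I + \frac{\eps}{K-\eps}\,dh^2$ differs from $I$ only by the rank-one term $\frac{\eps}{K-\eps}\,dh^2$, I would first record how $W$ transforms under $I \mapsto A = I + \phi\, dh^2$, where $\phi = \frac{\eps}{K-\eps}$ is a function on $\Sigma$. The key algebraic point is that $W(A,II)$ and $W(I,II)$ differ by a term built from $dh$ and from $II$ contracted against $dh$; schematically, $W(A,II) = W(I,II) + \phi \cdot (\text{terms involving } dh \otimes dh \text{ and } II(\nabla h, \cdot))$. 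I would carry this out by choosing adapted local coordinates $(u,v)$ so that the situation in each of the two cases becomes transparent.

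For case (1), where $\gamma$ lies in a horizontal slice $\{t = \text{const}\}$: along such a curve the height function $h$ is constant, so $dh$ vanishes along $\gamma$ in the tangential direction. I would pick coordinates $(u,v)$ with $\gamma = \{v = 0\}$ and $u$ a parameter along $\gamma$; then $h_u = 0$ along $\gamma$, so $dh = h_v\, dv$ restricted to a neighborhood of $\gamma$ has no $du$-component on $\gamma$. Plugging into the coordinate formula for $W$, the extra terms coming from $\phi\, dh^2$ that contribute to the $du^2$-coefficient of $W(A,II)$ all carry a factor $h_u$ and hence vanish along $\gamma$. Therefore the $du^2$-coefficient of $W(A,II)$ equals that of $W(I,II)$ along $\gamma$, and since $\gamma$ being a line of curvature is precisely the statement that this coefficient vanishes (as $\gamma = \{v=0\}$), the two notions coincide. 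For case (2), where $\gamma$ is an integral curve of $\nabla h$: here I would choose coordinates adapted to $h$, taking $u = h$ (or a reparametrization of it) so that $\nabla h$ is proportional to $\partial_u$ and $dh = h_u\, du$ with $h_v \equiv 0$ in a neighborhood of $\gamma$, or at least $\langle \nabla h, \partial_v\rangle = 0$ along $\gamma$. Then $dh^2 = h_u^2\, du^2$ has only a $du^2$-component, so the extra terms $\phi\, dh^2$ modify only the $e$-type entry of $A$ (the $E$-coefficient), and one checks from the formula $\sqrt{EG-F^2}\,W = (Ef - Fe)\,du^2 + \cdots + (Fg - Gf)\,dv^2$ that the vanishing of the relevant coefficient of $W(A,II)$ along $\gamma$ is equivalent to that of $W(I,II)$; the point is that along an integral curve of $\nabla h$ one also has $F = I(\partial_u,\partial_v)$ controlled so the modified and unmodified forms share the same zero set for $W$ restricted to $\gamma$.

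The main obstacle I anticipate is organizing the bookkeeping so that in each case the correct coefficient of $W$ is singled out and shown to be unaffected (up to a nonzero factor) by the rank-one perturbation. The cleanest route is probably coordinate-free: write $A = I + \phi\, dh \otimes dh$, note that the shape operator $S_A$ of $(A,II)$ relates to $S_I$ of $(I,II)$ via $II(X,Y) = A(S_A X, Y) = I(S_A X, Y) + \phi\, dh(X)\,dh(Y)$, and deduce that $S_A$ and $S_I$ agree on the $I$-orthogonal complement of $\nabla h$ up to the correction, so that the eigendirections are preserved exactly along curves tangent or orthogonal to $\nabla h$ — which is precisely what hypotheses (1) (curve in a slice, tangent to a direction $I$-orthogonal to $\nabla h$ when $\nabla h \neq 0$, and trivial when $\nabla h = 0$) and (2) (curve tangent to $\nabla h$) guarantee. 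I would present the argument this way, reducing everything to the observation that a line of curvature is a curve tangent to an eigendirection of the shape operator, and that the rank-one modification of the metric by $dh^2$ fixes the eigendirections $\nabla h$ and $(\nabla h)^\perp$. The equivalence with the classical lines of curvature then follows because $S_I = S_{(I,II)}$ is exactly the classical shape operator.
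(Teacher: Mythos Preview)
Your final coordinate-free paragraph is essentially the paper's entire proof; everything before it is unnecessary. The paper simply observes that under either hypothesis the $A$-orthogonal complement of $\gamma'$ coincides with the $I$-orthogonal complement: in case (1), $dh(\gamma')=0$ gives $A(\gamma',\cdot)=I(\gamma',\cdot)$ directly; in case (2), any $n$ with $I(\gamma',n)=0$ satisfies $dh(n)=I(\nabla h,n)=0$, whence again $A(\gamma',n)=I(\gamma',n)=0$. Since $\gamma'$ is a principal direction for a pair $(g,II)$ iff $II(\gamma',n)=0$ for $n$ $g$-orthogonal to $\gamma'$, the two line-of-curvature conditions are literally the same equation.

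One caution on your phrasing: the rank-one perturbation $I\mapsto I+\phi\,dh^2$ does \emph{not} force $\nabla h$ and $(\nabla h)^{\perp}$ to be eigendirections of either shape operator, nor does it make $S_A$ and $S_I$ ``agree on $(\nabla h)^{\perp}$ up to a correction'' in any useful sense. The invariant that actually does the work is the simpler one above: the orthogonal complement of $\gamma'$ is unchanged, and that alone transports the condition $II(\gamma',n)=0$. Your coordinate computation in case (2) is vague exactly where this would have to be made explicit --- you implicitly need $F_I=0$ along $\gamma$ (Fermi-type coordinates) to get $h_v=0$ and make the extra term $\phi h_u(h_u f - h_v e)$ drop out; the coordinate-free argument sidesteps this entirely.
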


\begin{proof}
This follows from the fact that the orthogonal vectors to $\gamma'$ for the metrics
$A$ and $I$ agree. Indeed, if $\gamma$ is horizontal then $dh(\gamma')=0$, whereas
in the second case $dh(n)=0$ for any vector $n$ orthogonal to $\gamma'$ with respect
to $I$. Thus, by the very definition of $A$ (see Equation \eqref{A}) we are done.
\end{proof}

The following result is then a straightforward consequence of Theorem \ref{th:choe}
and the above lemma.

\begin{corolario}
Let $\Sigma \subset \m ^2 (\eps )\times \r$ be disk-type $K-$surface satisfying the
regularity conditions stated at the beginning of this section. Assume that the
following conditions are satisfied:
\begin{enumerate}
\item The number of vertices in $\partial \Sigma $ with angle $<\pi$ (with respect to the
induced metric $A$) is less than or equal to $3$.
\item Every regular component $\gamma$ of $\partial \Sigma $ is a line of curvature (in the
classical sense) satisfying one of the following properties:
 \begin{itemize}
\item $\gamma$ is contained in a horizontal slice.
\item $\gamma$ is an integral curve of $\,\nabla h$, where $h$ is the height function of $\Sigma$.
\end{itemize}
\end{enumerate}
Then, $\Sigma $ is a piece of one of the complete examples described in \cite{AEG1}.
\end{corolario}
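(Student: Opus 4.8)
The plan is to deduce this corollary directly from Theorem~\ref{th:choe} by verifying, one by one, that its hypotheses are met for the Codazzi pair $(A,II)$ on a $K$-surface. First I would recall that since $\Sigma$ is a $K$-surface, \cite{AEG1} tells us that $(A,II)$ is indeed a Codazzi pair on the ambient $\tilde\Sigma$, and that its extrinsic curvature is the positive constant $c=K-\eps$. Consequently the mean curvature of the pair satisfies $H(A,II)=\sqrt{q(A,II)+c}$ (the same relation used in the proof of Theorem~\ref{th:KIproducto}), which immediately yields a bound $\norm{dH}\le h\sqrt q$ for a continuous function $h$, the norm being taken with respect to $A$. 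This is exactly hypothesis~(1) of Theorem~\ref{th:choe}.

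Next I would handle the boundary conditions. Hypothesis~(1) of the corollary is literally hypothesis~(2) of Theorem~\ref{th:choe} — the number of vertices with angle $<\pi$ measured in the metric $A$ is at most $3$ — so nothing is needed there. For hypothesis~(2) of the corollary, the point is to pass from "line of curvature in the classical sense" to "line of curvature for the pair $(A,II)$". This is precisely what Lemma~\ref{lem:ldc} provides: each regular component $\gamma$ of $\partial\Sigma$ is assumed to be a classical line of curvature that is moreover either contained in a horizontal slice or an integral curve of $\nabla h$, and under either of these two geometric conditions Lemma~\ref{lem:ldc} asserts the equivalence between being a line of curvature of $(A,II)$ and being a classical line of curvature. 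Hence every regular boundary component is a line of curvature for $(A,II)$, which is hypothesis~(3) of Theorem~\ref{th:choe}.

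With all three hypotheses of Theorem~\ref{th:choe} verified, the theorem gives that $\Sigma$ is totally umbilical for the pair $(A,II)$, i.e. $q(A,II)\equiv 0$ on $\Sigma$. Equivalently, the quadratic differential $Q(A,II)$ (equivalently $Q(II,A)$, up to the usual identification discussed in Section~\ref{sec:prelim}) vanishes identically on $\Sigma$. The final step is to invoke the classification in \cite{AEG1}: a $K$-surface on which this differential vanishes identically is a piece of one of the complete rotationally invariant examples constructed there. Combining, $\Sigma$ is a piece of one of those complete examples, as claimed.

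I do not expect any genuine obstacle here — the corollary is an assembly of already-established pieces. The only points requiring a little care are purely bookkeeping: making sure that the regularity hypotheses "stated at the beginning of the section" (that $\Sigma$ sits as an interior set of a boundaryless differentiable $\tilde\Sigma$ on which $(A,II)$ is defined) are genuinely inherited, so that Theorem~\ref{th:choe} applies verbatim; and checking that the two alternative conditions on $\gamma$ in the corollary are exactly the two cases covered by Lemma~\ref{lem:ldc}, so that the translation of the line-of-curvature condition is lossless. Neither of these is more than a routine verification.
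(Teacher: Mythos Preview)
Your proposal is correct and matches the paper's approach exactly: the paper states that the corollary is ``a straightforward consequence of Theorem~\ref{th:choe} and the above lemma'' (i.e.\ Lemma~\ref{lem:ldc}), and you have simply spelled out that straightforward consequence in detail, verifying the three hypotheses of Theorem~\ref{th:choe} for the Codazzi pair $(A,II)$ and invoking the classification in \cite{AEG1} at the end. There is nothing to add.
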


\begin{remark}
The hypothesis $1$ in Theorem \ref{th:KIproducto} is sharp. Indeed, consider the
revolution surface $\Sigma _0$ with positive constant Gaussian curvature in $\r ^3$
given by
$$ \psi (s ,t) = (\sin (s) k(t) , \cos (s) k(t), h(t)) $$with
\begin{equation}\label{ejemplo}
\begin{split}
k(t) &=  b \sin (\sqrt{K}t)\\
h(t) &=   \int _0 ^t \sqrt{1-b^2 K\cos ^2 (\sqrt{K}r)} \, dr
\end{split}
\end{equation}

The complete example is when $b=1$. Also, up to scaling, we can assume that $K=1$.
Thus if we consider $b \in (0,1)$ in \eqref{ejemplo} we obtain a constant Gaussian
curvature (CGC) surface with non vanishing $Q(A,II)$.

We take $\Sigma$ a simply-connected region bounded by two meridians and two
horizontal circles (and therefore lines of curvature), then $\Sigma$ is a compact
embedded disk type CGC bounded by lines of curvature meeting at $4$ vertices with
angle $\pi / 2$.

Similar examples can be constructed in $\s ^2 \times \r$ and $\h ^2 \times \r$ by
constructing a non-complete rotational following the computations in \cite[Section
3]{AEG1}. These surfaces have non vanishing holomorphic quadratic differential
$Q(A,II)$, so we can choose a local conformal parameter (for the second fundamental
form) $z$ so that $Q(A,II)=dz^2$. Thus the piece of the surface corresponding to the
square $\{|{\rm Re} \, z|\leq t_0,\; |{\rm Im} \, z|\leq t_0\}$ in the parameter
domain gives an example of a disk type CGC surface bounded by lines of curvature
(for the associated Codazzi pair) meeting at $4$ vertices with angle equal to $\pi /
2$.
\end{remark}


\section{Proof of the Main Theorem}\label{sec:proof}

We now prove the Theorem \ref{th:choe} stated in Section \ref{sec:choe}. From now
on, $\Sigma $ satisfies the regularity conditions as the previous Section.\\

{\sc{Proof of Theorem \ref{th:choe}}:}\\

Consider on $\tilde\Sigma\supset\Sigma$ the Riemannian metric given by $A$, and let
$z$ be a conformal parameter. Set $\tilde Q$ the $(2,0)-$part of the
complexification of $B$ for the conformal structure given by $A$ on $\tilde \Sigma$, and $Q=\tilde Q _{|\Sigma}$.
Assume, reasoning by contradiction, that $\Sigma$ is not totally umbilical, that is, $Q$ does not vanish identically.


At every non umbilical point in $\tilde{\Sigma}$ there exist two orthogonal (for the metric $A$) lines of curvature,
whereas at an umbilic point (that is, a zero of $\tilde Q$) the lines of curvature bend sharply. Since $\,\mbox{Im} \tilde{Q}=0\,$ on these curves, if we write $\tilde{Q}=f(z)dz^2$ in a neighborhood of $z_0$, the rotation index at an umbilic point $z_0$ is given by
$$I(z_0)=\frac{-1}{4\pi}\delta\mbox{arg}f,$$
where $\delta\mbox{arg}f$ is the variation of the argument of $f$ as we wind once around the singular point.\\

At an interior umbilic point of $\Sigma$, the rotation index of the lines of curvature of $\Sigma$ clearly agrees with the one of $\tilde\Sigma$.
At a point $z_0\in\partial\Sigma$ the rotation index of the lines of curvature of $\Sigma$ is defined as follows (see also \cite{choe}). Consider $\varphi:D_+\to\Sigma$ an
immersion of $D_+=\{\xi\in\c\;:\;|\xi|<1,\mbox{Im}(z)\geq 0\}$ into $\Sigma$,  mapping the diameter of the half disk into
$\partial\Sigma$. The lines of curvature can be pulled back to a line field in $D_+$. Moreover, since the regular curves of
$\partial\Sigma$ are lines of curvature, they can be extended by reflection to a continuous (with singularities) line field on the whole disc.
Thus, we define the rotation index $I(z_0)$ of $Q$ at $z_0\in\partial\Sigma$ to
be half of the rotation index of the extended lines of curvature.

If all the singularities are isolated, the Poincaré-Hopf index theorem gives that $$\sum_{z\in\Sigma}I(z)=1.$$
 The next two claims show that the umbilic points are isolated and give a bound for the rotation index.\\

\begin{quote}
{\bf Claim 1.} \em The zeroes of $Q$ in
$\Sigma\setminus\partial\Sigma$ are isolated, and the rotation index $I(z_0)$ of $X$ at an interior singular point is $\,\leq
-1/2\,$, in particular, it is always negative.\\
\end{quote}

\noindent
{\em Proof of Claim 1.}
Let us see first that the
singularities are isolated. Taking into account Lemma \ref{l1.2} and our hypothesis
1 we infer that
\begin{equation}\label{eq:desig} |\tilde Q_{\bar z}|^2=\frac{\lambda}{2} \frac{||dH||^2}{q} |\tilde Q|^2
 \leq  \frac{\lambda}{2} h^2 |\tilde Q|^2,\end{equation}
on $\tilde\Sigma$, where $z$ is a conformal parameter for the metric $A$, and $\lambda$ is the conformal factor of $A$ in the
parameter $z$. Then, Lemma \ref{lem:cauchy} gives that the zeroes of $\tilde Q$ in
$\tilde\Sigma$ are isolated (recall that we are assuming that $Q$ does not vanish
identically). In particular, the zeroes of $Q$ are isolated in $\Sigma$.
Moreover, locally around a zero $z_0\in\tilde\Sigma$ of $\tilde Q$ we have that
\begin{equation}\label{eq:indneg}
\tilde Q (z)=(z-z_0)^k g(z) dz^2,
\end{equation}
where $k\in\mathbb{N}$ and $g(z)$ is a non-vanishing continuous function.
Therefore, the rotation index is $-k/2\leq
-1/2$, in particular, it is always negative. \hfill{$\Box$}\\

\begin{quote}{\bf Claim 2. } \em The boundary singular points of $X$ are isolated. Moreover, let $z_0\in\partial\Sigma$ be a singular point, then
\begin{enumerate}[(i)]
\item if $z_0$ is not a vertex, its rotation index is $I(z_0)< 0$,
\item if $z_0$ is a vertex of angle $> \pi$, then $I(z_0)< 0$,
\item if $z_0$ is a vertex of angle $<\pi$, then $I(z_0) \leq 1/4$.\\
\end{enumerate}
\end{quote}

\noindent
{\em Proof of Claim 2. }
Consider  $\varphi:D_+\to\Sigma$ a conformal immersion as explained above, with $\varphi(0)=z_0$.
Since $Q$ satisfies $\,\mbox{Im}Q=0\,$ on $\partial\Sigma$, its pull-back can be reflected through the diameter to a continuous quadratic form on the whole unit disc $D$, that will be denoted by $Q^\ast$. Notice that when $z_0$ is a vertex $\varphi'$ could be zero or infinite.

Let $\theta$ be the angle of $\partial\Sigma$ at $z_0$ ($\theta=\pi$ if $z_0$ is not a vertex).
Then $\varphi'$ grows as $|\xi|^{\frac{\theta}{\pi}-1}$ at the origin.
Around $z_0$, $\tilde{Q}$ is given by \eqref{eq:indneg}, although in this case $k$ could be zero, since when $\theta=\pi/2$, $z_0$ is not necessarily  a zero of $\tilde Q$. In particular, $z_0$ is an isolated singularity. Moreover, there are $2(k+2)$ lines of curvature in $\tilde\Sigma$ emanating from $z_0$, and meeting at an equal-angle system of angle $\pi/(k+2)$. In particular, since the curves in $\partial \Sigma$ are lines of curvature, $\theta$ must be a multiple of $\pi/(k+2)$.

If we write $Q^\ast=f(\xi)d\xi^2$ for $\xi\in D$, then
$$f(\xi)=\big(\varphi(\xi)-\varphi(0)\big)^k (\varphi'(\xi))^2 g(\varphi(\xi)), \quad \xi\in D_+.$$
Then the variation of the argument of $f(\xi)$ as we wind once around the origin is $2\theta(k+2)-4\pi$, and the rotation index is $$I^\ast = 1 -\frac{\theta}{2\pi}(k+2).$$

In particular, if  $\theta \geq \pi$, then $I^\ast \leq -k/2 < 0$, whereas for $\theta <\pi$ we have $I^\ast\leq 1/2$ (as $I^\ast<1$, and $2I^\ast$ must be an integer).
Since $I(z_0)=I^\ast/2$, the claim is proved.
\hfill{$\Box$}
\vspace*{0.4cm}

Taking into account the two previous claims, and since the number of vertices of angle $<\pi$ is less than or equal to $3$, we can conclude that
$$\sum_{p\in \Sigma} I(p) \leq 3/4<1,$$
which contradicts the Poincaré-Hopf theorem and shows that $\Sigma$ is totally umbilical.


\end{document}